\newtheorem{theorem}{Theorem}[section]
\newtheorem{proposition}[theorem]{Proposition}
\newtheorem{assumption}[theorem]{Assumption}
\newtheorem{corollary}[theorem]{Corollary}
\theoremstyle{definition}
\theoremstyle{remark}
\numberwithin{equation}{section}
\def\N{{\mathbb N}}
\def\Z{{\mathbb Z}}
\def\Q{{\mathbb Q}}
\def\R{{\mathbb R}}
\newcommand{\dd}{\text{\normalfont d}}
\renewcommand{\a}{\alpha}
\newcommand{\E}{{\mathbb E}}
\renewcommand{\P}{{\mathbb P}}
\newcommand{\IND}{\mathbbm{1}}
\newcommand{\cA}{\ensuremath{\mathcal A}}
\newcommand{\cC}{\ensuremath{\mathcal C}}
\newcommand{\cD}{\ensuremath{\mathcal D}}
\newcommand{\cF}{\ensuremath{\mathcal F}}
\newcommand{\cL}{\ensuremath{\mathcal L}}
\newcommand{\cM}{\ensuremath{\mathcal M}}
\newcommand{\cO}{\ensuremath{\mathcal O}}
\newcommand{\cX}{\ensuremath{\mathcal X}}
\newcommand{\cZ}{\ensuremath{\mathcal Z}}
\newcommand{\bbR}{\ensuremath{\mathbb R}}
\newcommand{\bbX}{\ensuremath{\mathbb X}}
\newcommand{\bbZ}{\ensuremath{\mathbb Z}}
\begin{document}
\title[From QIP to semigroup convergence]{From quenched 
invariance principle
	to semigroup convergence with 
 applications to  exclusion processes} 
\author{Alberto Chiarini}
\address{Università degli Studi di Padova}
\curraddr{}
\email{chiarini@math.unipd.it}
\thanks{}

\author{Simone Floreani}
\address{University of Oxford}
\email{simone.floreani@maths.ox.ac.uk}
\curraddr{}
\thanks{}


\author{Federico Sau}
\address{Università degli Studi di Trieste}
\curraddr{}
\email{federico.sau@units.it}
\thanks{}
\subjclass[2020]{60K35; 60K37; 60G22; 35B27; 60F17; 82C41; 82B43.}
\keywords{Quenched invariance principle; Ergodic theorem; Symmetric exclusion process; Hydrodynamic limit}
\maketitle

\begin{abstract}
	Consider a random walk on $\Z^d$ in a translation-invariant and ergodic random environment and starting from the origin. 
	In this short note,  assuming that a quenched invariance principle for the opportunely-rescaled walks holds,   we show how to derive an $L^1$-convergence of the corresponding semigroups. We then apply this result to obtain a quenched pathwise hydrodynamic limit for the simple symmetric exclusion process on $\Z^d$, $d\ge 2$, with i.i.d.\ symmetric nearest-neighbors conductances $\omega_{xy}\in [0,\infty)$ only satisfying
	$$\Q(\omega_{xy}>0)>p_c\ ,$$
	where $p_c$ is the critical  value for bond percolation.
\end{abstract}
\section{Introduction}
Random walks in random environment received a lot of attention in the recent years, and studying their scaling limits is one of the main challenges (see, e.g., the surveys \cite{biskup_recent_2011,kumagai_survey}).
Often the limit theorem is stated in the form of a \textit{quenched invariance principle} (QIP) for the random walk \textit{starting from the origin}. Namely, calling $\Q$ the law of the environment, under some natural assumptions of stationarity and ergodicity of $\Q$, one has that, for $\Q$-a.e.\ realization of the environment, the random walk $(X_t)_{t\ge 0}$ with $X_0=0$ rescales to a process $(\mathbb X_t)_{t\ge 0}$ whose law does not depend on the specific realization of the environment.

Examples that we have in mind are random walks on $\Z^d$ with generator of the form
\begin{equation}\label{eq:generator-RW-general}
A f(x):=  \frac{1}{\nu_x}
\sum_{y\in \bbZ^d}   \omega_{xy}\left(f(y)-f(x)\right)\ ,\qquad x \in \Z^d\ ,
\end{equation}
with random bond- and site-weights $\omega_{xy}$ and $\nu_x$, respectively.  In particular, if $\omega_{xy}=\omega_{yx}$, the walk is referred to as \textit{Random Conductance Model} (see, e.g., \cite{biskup_recent_2011}), with $\nu_x$ playing the role of invariant measure for the walk: for $\nu_x\equiv 1$  one obtains the so-called variable speed random walk, while for $\nu_x=\sum_{y\in \Z^d}\omega_{xy}$ the constant-speed random walk. Another relevant example is the \textit{Bouchaud Trap Model} (see, e.g., \cite{ben-arous_cerny_dynamics_2006}): given  a collection  of i.i.d.\ $\N$-valued  random variables $\alpha=(\a_x)_{x\in\Z^d}$ satisfying, for some $\beta\in (0,1)$,
\begin{equation*}
	\Q\big(\alpha_0\ge u\big)= u^{-\beta}\left(1+o(1)\right)\ ,\quad \text{as}\ u\to \infty\ ,
\end{equation*}   
  such a model  is the continuous-time random walk  with  infinitesimal generator
\begin{equation}\label{eq:generator-BTM}
A f(x):=  
\sum_{y\sim x} \alpha_x^{a-1}\alpha_y^a\left(f(y)-f(x)\right)\ ,\qquad x \in \Z^d \ .
\end{equation} 
In this formula, the summation runs over nearest-neighbor sites of $x$ in $\Z^d$, while $a\in [0,1]$ is an additional parameter tuning the asymmetry of the model.

For all these models, QIP has been established under various conditions (see, e.g., \cite{andres_invariance2013,andres_chiarini_deuschel_slowik_2018,andres_deuschel_slowik_2015,barlow_invariance_2010,BarlowCerny,cernyEJP,biskup_recent_2011,ben-arous_cerny_dynamics_2006,bella_schaeff20,BergerDeuschel2014,BiskupChenKumgaiWang2021,DeuschelNguyenSlowik18,chen-croydon-kumagai_quenched_2015}). In many cases, the limiting process is a Brownian motion with a diffusion matrix which does not depend on the realization of the environment. In other cases, e.g.,  for the Bouchaud trap model and the constant speed random walk with i.i.d.\ and heavy-tailed $\omega_{xy}^{-1}$ for $d\ge 2$,  the scaling limit is a semi-Markov process known as \textit{Fractional Kinetics Process}  \cite{BarlowCerny}.

Such scaling limits of random walks in random environment can be useful when studying hydrodynamic limits of  interacting particle systems (IPS) in random environment. Indeed, as shown in, e.g., \cite{nagy_symmetric_2002,faggionato_bulk_2007}, for  IPS in random environment which satisfy self-duality,   the quenched convergence of the empirical density fields of the IPS (in the sense of  finite-dimensional distribution convergence) can be obtained from the quenched convergence of the semigroup of the one-particle system.

In most papers on random walks in random environment, only the convergence of the walk starting from the origin is addressed. However for applications in hydrodynamic limits of IPS in random environment, one typically needs a suitable convergence of the random walk's semigroups, which is implied by the strenghtening of the QIP from the origin in the form of an \emph{arbitrary-starting-point} QIP.  
It is worth mentioning that the problem of deriving an arbitrary-starting-point QIP was posed in \cite{rhodes_stochastic_2010}, and only recently established in \cite{chen-croydon-kumagai_quenched_2015} (see also \cite{redig_symmetric_2020,floreani_HDL_2021}) for a class of random environments for which a finer analysis on Green functions and heat kernels is available.

 In this paper, we show how to obtain from a QIP from the origin a suitable form of $L^1$-convergence of the corresponding semigroups (or pseudo-semigroups if the limiting process is not Markovian), under the only additional assumption of translation-invariance and ergodicity of the law of the underlying environment.
We then apply our result to derive a quenched functional hydrodynamic limit for the simple symmetric exclusion process on $\Z^d$, $d\ge 2$, with i.i.d.\ conductances in $[0,\infty)$ only satisfying \begin{equation*}\Q(\omega_{xy}>0)>p_c\ ,
\end{equation*}
so that bonds with strictly positive conductances percolate. The hydrodynamic equation is the heat equation with a diffusivity matrix of the form $\sigma^2 \boldsymbol I$, with $\sigma>0$ depending  on $d$ and $\Q$, but not on the specific realization of the environment.
The simple symmetric exclusion process is one of the most studied IPS and consists of many (possibly infinite) symmetric nearest-neighbor random walks, subject to the exclusion rule: only one particle per site is allowed. The hydrodynamic limit of the simple symmetric exclusion process with random conductances has been widely studied (see, e.g., \cite{faggionato2022hydrodynamic,faggionato_bulk_2007, faggionato_cluster_2008, faggionato_hydrodynamic_2009, floreani_HDL_2021, redig_symmetric_2020, jara_hydrodynamic_2011, jara_quenched_2008}); however, the assumptions on the conductances that we assume here have not been considered before.

The rest of the paper is organized as follows. In Section \ref{section: general results}, we introduce the general setting and we state the first main results of the paper, Theorem \ref{th:quenched_l1_conv_compacts} and Corollary \ref{cor:quenched_l1_conv_compacts}, on the convergence of semigroups. In that same section, we list some relevant examples to which our result applies. In Section \ref{section: SEP iid conductances}, we introduce the symmetric exclusion process on the infinite cluster with i.i.d.\ unbounded conductances and we derive its hydrodynamic limit. In Section \ref{section: other applications}, we sketch some further applications of Theorem \ref{th:quenched_l1_conv_compacts} to other kinds of exclusion processes.

\section{From QIP from the origin to semigroup $L^1$-convergence}\label{section: general results}
Consider a random environment  $\omega \in \Omega$, in the following form:
\begin{equation}\label{eq: environment}
 \omega=((\omega_{xy})_{x,y \in \Z^d},(\nu_x)_{x\in \Z^d})\ ,
\end{equation}
with $\omega_{xy}\in [0,\infty)$ and $\nu_x\in (0,\infty)$, and equip $\Omega$ with a $\sigma$-field $\cF$. The environment is sampled according to a probability measure $\Q$, which we  assume  to be invariant and ergodic under translations in $\Z^d$ (Assumption \ref{ass:ergodic}), and for which a QIP from the origin holds (possibly, for a proper subset of environments, see Assumption \ref{ass:QIP}). There are several examples of random environments satisfying such assumptions; for the reader's convenience, some of them are discussed in Section \ref{section: examples} below. 

   In what follows,  for all $z\in \Z^d$,  $\tau_z: \Omega\to \Omega$  denotes the translation map given by
\begin{equation*}
\omega=((\omega_{xy})_{x,y\in \Z^d},(\nu_x)_{x\in \Z^d})\longmapsto	\tau_z\, \omega := ((\omega_{x+z,y+z})_{x,y\in \Z^d},(\nu_{x+z})_{x\in \Z^d})\ .
\end{equation*} 
\begin{assumption}[Ergodicity of the environment]\label{ass:ergodic} All $(\tau_z)_{z\in \Z^d}$ are $\cF$-measurable. Moreover,  
	$\Q$ is invariant and ergodic under translations in $\Z^d$, i.e., $\Q(A)= \Q(\tau_z(A))$ for all $A\in \cF$ and $z\in \Z^d$, and $\Q(A)\in \{0,1\}$ for all $A\in \cF$ such that $\tau_z(A)=A$ for all $z\in \Z^d$.
\end{assumption}

Given a realization of the environment $\omega$ sampled according to $\Q$ satisfying Assumption \ref{ass:ergodic}, we consider the random walk $X$, with $X=(X_t)_{t\ge 0}$, on $\Z^d$, $d\ge 1$, with infinitesimal generator  $A=A^\omega$ given in \eqref{eq:generator-RW-general}. Furthermore,  $\mathbf P^\omega_z$ and $\mathbf E^\omega_z$ denote the law and corresponding expectation, respectively, of the random walk $X$ with $X_0=z\in \Z^d$.  Moreover, $\cD([0,\infty);\R^d)$ stands for the Polish space of $\R^d$-valued \emph{c\`{a}dl\`{a}g} paths equipped with the $J_1$-Skorokhod topology (see, e.g., \cite[\S16]{billingsley_convergence_1999}).

\begin{assumption}[QIP from the origin]\label{ass:QIP}
There exist:
\begin{enumerate}
	\item a sequence $(\theta_n)_{n\in \N}\subset (0,\infty)$;
	\item a set $\Omega_0\in \cF$ with $\Q(\Omega_0)>0$;
	\item \label{it:ass:QIP3} a process $\mathbb X=(\mathbb X_t)_{t\ge 0}$ with paths in $\cD([0,\infty);\R^d)$ and translation-invariant law $(\mathbf P^{\mathbb X}_x)_{x\in \R^d}$, that is, for all $x\in \bbR^d$, $\bbX$ under $\mathbf  P^{\mathbb X}_x$ has the same law as $x+\bbX$ under $\mathbf P^{\mathbb X}_0$;
\end{enumerate}    such that, for $\Q(\,\cdot\mid \Omega_0)$-a.e.\ $\omega$,  $X^{n}=(X_{t\theta_n}/n)_{t\ge 0}$ under $\mathbf P^\omega_0$ converges in law to $\mathbb X$ under $\mathbf P^{\mathbb X}_0$ in $\cD([0,\infty);\R^d)$ as $n\to \infty$. 
\end{assumption}
While for many examples the QIP from the origin holds for $\Q$-a.e.\ realization of the environment, the role of the set $\Omega_0\in \cF$ in Assumption \ref{ass:QIP} will become clear when looking at random walks on supercritical percolation clusters, which is the example treated in Section \ref{section: SEP iid conductances} below: in that case,  $\Omega_0$ coincides with the set of environments for which the origin belongs to the infinite cluster.
\subsection{Main result}
We are now ready to present our main result. In what follows,  $\mathbf E_x^{\mathbb X}$ denotes the expectation of $\mathbb X$ when $\mathbb X_0=x\in \R^d$. Finally, recall  $X^n=(X_{t\theta_n}/n)_{t\ge 0}$ from Assumption \ref{ass:QIP}.
\begin{theorem}\label{th:quenched_l1_conv_compacts}
	Under Assumptions \ref{ass:ergodic} and \ref{ass:QIP}, we have, for $\Q$-a.e.\ $\omega$, for all  compact sets $\cA\subset \R^d$, and for all uniformly continuous  bounded functions $G:\cD([0,\infty);\R^d)\to \R$, 
	\begin{align}\label{eq:conver_L1_on_compacts_general}
 \frac{1}{n^d} \sum_{x\in \Z^d} \IND_{\cA}(x/n)\IND_{\Omega_0}(\tau_x\,\omega) \left| \mathbf E^{\omega}_x[ G(X^n   ) ] - \mathbf E_{x/n}^{\mathbb X}[G(\mathbb X)]     \right|\xrightarrow{n\to \infty}0\ .	
	\end{align}
\end{theorem}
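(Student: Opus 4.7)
The plan rests on three ingredients: a translation argument reducing arbitrary starting points to the origin in a shifted environment; a discretisation of the spatial argument $x/n$ via uniform continuity of $G$; and an ergodic averaging lemma for varying sequences of test functions. For the first, a direct computation using the form \eqref{eq:generator-RW-general} of $A^\omega$ shows that under $\mathbf P^\omega_x$ the shifted process $(X_t-x)_{t\ge 0}$ has the same law as $X$ under $\mathbf P^{\tau_x\omega}_0$. Combined with translation-invariance of $\mathbf P^{\mathbb X}$ from item \eqref{it:ass:QIP3}, the summand in \eqref{eq:conver_L1_on_compacts_general} can be rewritten as
$$\IND_\cA(x/n)\,\IND_{\Omega_0}(\tau_x\omega)\,|\Psi_n(\tau_x\omega,x/n)|,\qquad \Psi_n(\omega,y):=\mathbf E^\omega_0[G(y+X^n)]-\mathbf E^{\mathbb X}_0[G(y+\mathbb X)].$$

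Next, fix $\epsilon>0$. Since $G$ is uniformly continuous on $\cD([0,\infty);\R^d)$ and the Skorokhod $J_1$-distance between two paths differing by a constant translation is bounded by the norm of that constant, there exists $\delta>0$ with $|\Psi_n(\omega,y)-\Psi_n(\omega,y')|\le 2\epsilon$ whenever $|y-y'|<\delta$. Cover $\cA$ by a finite Jordan-measurable partition $\{\cA_j\}_{j=1}^N$ of an open set $\tilde\cA\supset\cA$ with $|\tilde\cA|\le|\cA|+\epsilon$, each piece of diameter $<\delta$, and pick a representative $y_j\in\cA_j$. Setting $H_n^{(j)}(\omega):=\IND_{\Omega_0}(\omega)\,|\Psi_n(\omega,y_j)|$, the left-hand side of \eqref{eq:conver_L1_on_compacts_general} is then dominated by
$$\sum_{j=1}^N \frac{1}{n^d}\sum_{x\in\Z^d:\,x/n\in\cA_j} H_n^{(j)}(\tau_x\omega)\;+\;2\epsilon\,\frac{\#\{x\in\Z^d:\,x/n\in\tilde\cA\}}{n^d}.$$

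For each fixed $j$, Assumption \ref{ass:QIP} applied to the bounded continuous functional $\eta\mapsto G(y_j+\eta)$ gives $\Psi_n(\omega,y_j)\to 0$ for $\Q(\,\cdot\mid\Omega_0)$-a.e.\ $\omega$; since $H_n^{(j)}$ vanishes off $\Omega_0$, one obtains $H_n^{(j)}\to 0$ $\Q$-a.s., with $0\le H_n^{(j)}\le 2\|G\|_\infty$. The result then reduces to the following ergodic lemma: for any non-negative $\cF$-measurable sequence $(H_n)$ with $H_n\to 0$ $\Q$-a.s.\ and $\sup_n H_n\in L^1(\Q)$, and any Jordan-measurable $\cB\subset\R^d$, one has $\Q$-a.s.\ $n^{-d}\sum_{x/n\in\cB} H_n(\tau_x\omega)\to 0$. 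I would prove it by dominating $H_n$ by the \emph{fixed} function $\bar H_k:=\sup_{m\ge k}H_m$ for $n\ge k$, applying the $\Z^d$-Birkhoff ergodic theorem (available by Assumption \ref{ass:ergodic}) to $\bar H_k$ to obtain $\limsup_n n^{-d}\sum_{x/n\in\cB}H_n(\tau_x\omega)\le|\cB|\,\E_\Q[\bar H_k]$, and finally letting $k\to\infty$ so that $\E_\Q[\bar H_k]\downarrow 0$ by dominated convergence.

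Combining the three steps and letting $\epsilon$ run through a countable sequence decreasing to $0$ (so only countably many $\Q$-null sets are discarded) yields \eqref{eq:conver_L1_on_compacts_general}. The principal obstacle is the ergodic step, since standard Birkhoff applies only to a single integrand rather than to a varying sequence $H_n$; the monotone-majorant device bridges this gap, exploiting the uniform bound $\|G\|_\infty<\infty$. Minor further bookkeeping handles joint $(\omega,y)$-measurability of $\Psi_n$ (immediate from continuity in $y$) and the Jordan-measurable approximation of the compact set $\cA$.
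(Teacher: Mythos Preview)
Your argument follows the same skeleton as the paper's: reduce to the origin via the translation identity $\mathbf E^\omega_x[G(X^n)]=\mathbf E^{\tau_x\omega}_0[G(X^n+x/n)]$, then control the ergodic average of a varying sequence $H_n\to0$ by dominating it with the fixed majorant $\bar H_k=\sup_{m\ge k}H_m$ and applying Birkhoff. The one substantive difference is how the spatial shift $y=x/n$ is handled. You discretise $y$ using uniform continuity of $G$, reducing to finitely many representatives $y_j$. The paper instead restricts first to Lipschitz $G$ (with constant $\le L$, sup-norm $\le M$), invokes Skorokhod's representation theorem to couple $X^n$ and $\mathbb X$ on a common space, and uses the bound $|G(\bar X^n+y)-G(\bar{\mathbb X}+y)|\le (L\,\mathrm{d}_{\cD}(\bar X^n,\bar{\mathbb X}))\wedge 2M$, which is \emph{uniform in $y$}. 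This lets the paper absorb the supremum over all translations into its majorant $g_k$ and still conclude $\IND_{\Omega_0}g_k\to0$ $\Q$-a.s.; it then extends to uniformly continuous $G$ by density.

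That difference bears on the one gap in your writeup: the order of quantifiers. The theorem requires a single $\Q$-null set outside which convergence holds for \emph{all} compact $\cA$ and \emph{all} uniformly continuous bounded $G$. Your null sets, however, depend on $G$ (through $\delta$ and the $y_j$) and on $\cA$; running $\epsilon$ through a countable sequence removes the $\epsilon$-dependence but not the $G$- and $\cA$-dependence. The paper's Lipschitz/Skorokhod device produces a $G$-independent majorant for each triple $(K,L,M)$, so intersecting over $K,L,M\in\N$ and then approximating in sup norm suffices. Your route is salvageable --- choose the $y_j$ from $\Q^d$, restrict $G$ to a countable sup-norm-dense family of Lipschitz functions, and exhaust compacts by rational balls --- but this bookkeeping must be made explicit for the stated quantifier order to hold.
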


\begin{proof} 
Fix $L>0$ and $M>0$. Let $G:\cD([0,\infty);\R^d)\to \R$	be Lipschitz continuous with Lipschitz constant ${\rm Lip}_\cD(G)\le L$, and  bounded by $M$: for all $z, w	\in\cD([0,\infty);\R^d)$, 
		\begin{equation}
	\label{eq:Lipschitz}
	|G(z)-G(w)|\le L\,\mathrm{d}_{\cD}(z,w)\ ,\qquad \|G\|_{\cD,\infty}:=\sup_{z\in \cD([0,\infty);\R^d)}\left|G(z)\right|\le M\ .	\end{equation}
	where $\rm{d}_{\cD}(\cdot,\cdot)$ denotes the metric inducing the $J_1$-Skorokhod topology on $\cD([0,\infty);\R^d)$.

Fix $K>0$.
	Define, for all $k,n \in \N$,  $x\in \Z^d$, and $\omega \in \Omega$,
	\begin{align}\label{eq:definitions-f-g}
		\begin{aligned}
		f_{n,x}(\omega)&:=\left|\mathbf E^\omega_0\left[G(X^{n}+x/n)\right]-\mathbf E^{\mathbb X}_{x/n}\left[G(\mathbb X)\right]\right|\\
		g_n(\omega)&:=\sup\left\{f_{m,y}(\omega): m\ge n,\ |y|\ge n\right\}\ .
		\end{aligned}
		\end{align}
Let $\cA_n=\cA_n^K:=  \{y\in \Z^d: |y/n|\le K\} $; hence,  $\frac{1}{n^d}\sum_{x\in \cA_n}\IND_{\Omega_0}(\tau_x\,\omega)\, f_{n,x}(\tau_x\,\omega)$ coincides with the left-hand side of \eqref{eq:conver_L1_on_compacts_general} with $\cA\subset \R^d$ satisfying $n\cA\cap \Z^d=\cA_n$.

	For all $n,k \in \N$ with $k\le n$, by estimating $f_{n,x}\le g_k$ for $x\in \cA_n\setminus \cA_k$ and $f_{n,x}\le 2\|G\|_{\cD,\infty}\le 2M$ for $x\in \cA_k$, we get
		\begin{align}\label{eq:crucial}
		&	\frac{1}{n^d}\sum_{x\in \cA_n} \IND_{\Omega_0}(\tau_x\,\omega)f_{n,x}(\tau_x\,\omega)\le \frac{1}{n^d}\sum_{x\in \cA_n}\IND_{\Omega_0}(\tau_x\, \omega)\, g_k(\tau_x\, \omega) + \frac{2}{n^d}\sum_{x\in \cA_k}M\ .
	\end{align}
The second term on the right-hand side of \eqref{eq:crucial} vanishes, for every fixed $k \in \N$, as $n\to \infty$. As for the first term, since $\IND_{\Omega_0}\, g_k:\Omega\to \R$ is bounded, Assumption \ref{ass:ergodic} and the pointwise ergodic theorem yield, $\Q$-a.s.\ and for all $k \in \N$,
\begin{equation}\label{eq:EQ}
		\limsup_{n\to \infty}\frac{1}{n^d}\sum_{x\in \cA_n}\IND_{\Omega_0}(\tau_x\, \omega)\,g_k(\tau_x\, \omega) \le \E_\Q\left[\IND_{\Omega_0}\, g_k\right]\ .	
\end{equation} 

We conclude by showing that, $\Q$-a.s., $\IND_{\Omega_0}\,g_k\to 0$  as $k\to \infty$; the dominated convergence theorem would ensure	 that the right-hand side (thus, the left-hand side) of \eqref{eq:EQ} vanishes as $k\to \infty$.    By translation-invariance of $\mathbb X$ (see item \eqref{it:ass:QIP3} in Assumption \ref{ass:QIP}), we have
\begin{align*}
	f_{n,x}(\omega)=\left|\mathbf E^\omega_0\left[G(X^n+x/n)\right]-\mathbf E^{\mathbb  X}_0\left[G(\mathbb X+x/n)\right] \right|\ .
\end{align*}  
Moreover, by the convergence in Assumption \ref{ass:QIP} and Skorokhod's representation theorem, for $\Q(\,\cdot\mid\Omega_0)$-a.e.\ $\omega$, there exists a coupling of $(X^{n,\omega}, \mathbb X)$, say $(\bar X^{n,\omega},\bar {\mathbb X})$, with law $\bar {\mathbf P}^\omega_0$ and expectation $\bar {\mathbf E}^\omega_0$, for which the convergence in $\cD([0,\infty);\R^d)$ occurs $\bar {\mathbf P}^\omega_0$-a.s.:
\begin{align}\label{eq:a.s.convergence}\bar{\mathbf P}^\omega_0\text{-a.s.}\ ,\qquad
	\mathrm{d}_{\cD}(\bar X^n,\bar {\mathbb X})\xrightarrow{n\to \infty} 0\ .
\end{align}
By combining these facts, we get, for $\Q$-a.e.\ $\omega \in \Omega$, 
\begin{align*}
\IND_{\Omega_0}(\omega)\,g_k(\omega)&:= \sup\left\{f_{n,y}(\omega): n\ge k, |y|\ge k\right\}\\
&\le\IND_{\Omega_0}(\omega)\, \sup\left\{\left|\bar{\mathbf E}^\omega_0\left[G(\bar X^n+y/n)-G(\bar{\mathbb X}+y/n)\right]\right|: n\ge k, |y|\ge k \right\}\\
&\le \IND_{\Omega_0}(\omega)\, \sup\left\{\bar{\mathbf E}^\omega_0\left[\left(L\,\mathrm{d}_{\cD}(\bar X^n,\bar {\mathbb X})\right)\wedge 2M\right]: n\ge k\right\}\xrightarrow{k\to \infty}0\ ,
\end{align*}
where the last inequality is a consequence of \eqref{eq:Lipschitz}, while the last step follows by \eqref{eq:a.s.convergence} and the dominated convergence theorem. 

We thus have proved that, given $K,L, M >0$, there exists $\Omega^{K,L,M}\in \cF$ such that:
\begin{itemize} 
	\item $\Q(\Omega^{K,L,M})=1$;
	\item  \eqref{eq:conver_L1_on_compacts_general} holds for all $\omega \in \Omega^{K,L,M}$, for all compact sets $\cA\subset \R^d$ contained in the centered Euclidean ball of radius $K>0$, and for all Lipschitz continuous  bounded functions $G$ satisfying ${\rm Lip}_\cD(G)\le L$ and $\|G\|_{\cD,\infty}\le M$.
\end{itemize}  By taking  $\Omega':=\cap_{K,L,M\in \N}\,\Omega^{K,L,M}$, $\Q(\Omega')=1$, and \eqref{eq:conver_L1_on_compacts_general} holds for all $\omega \in  \Omega'$, for all compact sets $\cA\subset \R^d$, and for all Lipschitz continuous bounded functions on $\cD([0,\infty),\R^d)$. Since this latter function space is dense in the space of uniformly continuous  bounded functions with respect to the uniform norm $\|\cdot\|_{\cD,\infty}$, this concludes the proof.  
\end{proof}
As a direct consequence of Theorem \ref{th:quenched_l1_conv_compacts},  we obtain the following result, which will be used in the applications of Sections \ref{section: SEP iid conductances} and \ref{section: other applications}. In what follows, $\cC_c^+(\R^d)$ denotes the space of non-negative and compactly supported continuous functions on $\R^d$.
\begin{corollary}\label{cor:quenched_l1_conv_compacts} Under Assumptions \ref{ass:ergodic} and \ref{ass:QIP}, and further assuming that $\mathbb X$ has continuous sample paths, it holds that, for $\Q$-a.e.\ $\omega$ and for all $t>0$,  compact sets $\cA\subset \R^d$, and uniformly continuous  bounded functions $g:\R^d\to \R$, 
	\begin{align}\label{eq:conver_L1_on_compacts}
		\frac{1}{n^d} \sum_{x\in \Z^d} \IND_{\cA}(x/n)\IND_{\Omega_0}(\tau_x\,\omega) \left| \mathbf E^{\omega}_x[ g(X_{t\theta_n}/n) ] - \mathbf E_{x/n}^{\mathbb X}[g(\mathbb X(t))]     \right|\xrightarrow{n\to \infty}0\ .	
	\end{align}

Additionally, assume that for some  $f\in \cC_c^+(\R^d)$ the following two conditions hold true:
	\begin{align}\label{eq:conver_diff_on_compacts}
	\frac{1}{n^d} \sum_{x\in \Z^d} \IND_{\Omega_0}(\tau_x\,\omega) \left( \mathbf E^{\omega}_x[ f(X_{t\theta_n}/n) ] - \mathbf E_{x/n}^{\mathbb X}[f(\mathbb X(t))]     \right)\xrightarrow{n\to \infty}0\ ,
\end{align}
and
\begin{align}\label{eq:integrability}
\lim_{k\to \infty}\limsup_{n\to \infty}\frac{1}{n^d}\sum_{\substack{x\in \Z^d\\
	|x|>kn}} \IND_{\Omega_0}(\tau_x\,\omega)\mathbf E_{x/n}^{\mathbb X}\left[f(\mathbb X_t)\right]\, \dd x=0	\ .
\end{align}
Then, 
		\begin{align}\label{eq:conver_L1_for_compactly_supported-ftcs}
		\frac{1}{n^d} \sum_{x\in \Z^d} \IND_{\Omega_0}(\tau_x\,\omega) \left| \mathbf E^{\omega}_x[ f(X_{t\theta_n}/n) ] - \mathbf E_{x/n}^{\mathbb X}[f(\mathbb X(t))]     \right|\xrightarrow{n\to \infty}0\ .
	\end{align}
\end{corollary}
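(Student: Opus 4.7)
The plan for \eqref{eq:conver_L1_on_compacts} is to replay the proof of Theorem~\ref{th:quenched_l1_conv_compacts} verbatim, taking the cylindrical functional $G(w):=g(w(t))$, $w\in\cD([0,\infty);\R^d)$, in place of a generic uniformly continuous bounded $G$ on $\cD$. Although this $G$ is only bounded and not even continuous on $\cD$ in general, the argument has to be amended at a single spot, namely the proof that $g_k(\omega)\to 0$ for $\Q$-a.e.\ $\omega$. By Skorokhod's representation theorem, for $\Q(\,\cdot\mid \Omega_0)$-a.e.\ $\omega$ one still obtains a coupling $(\bar X^n,\bar{\mathbb X})$ with $\bar X^n\to \bar{\mathbb X}$ in $\cD$ $\bar{\mathbf P}^\omega_0$-almost surely; since $\mathbb X$ has continuous sample paths, $J_1$-convergence to a continuous limit is equivalent to uniform convergence on compact time intervals, so in particular $\bar X^n_t \to \bar{\mathbb X}_t$ almost surely. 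Using the uniform continuity of $g$, with modulus $\omega_g$, one then has, uniformly in $y\in\Z^d$,
\[
\bigl|g(\bar X^n_t+y/n)-g(\bar{\mathbb X}_t+y/n)\bigr|\le \omega_g\bigl(|\bar X^n_t-\bar{\mathbb X}_t|\bigr)\wedge 2\|g\|_\infty,
\]
so dominated convergence yields $g_k(\omega)\to 0$, and the ergodic-theorem step in the proof of Theorem~\ref{th:quenched_l1_conv_compacts} carries over without further change.

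For \eqref{eq:conver_L1_for_compactly_supported-ftcs}, I would proceed by a positive/negative-part decomposition. Put
\[
h_n(x):=\IND_{\Omega_0}(\tau_x\omega)\Bigl(\mathbf E^\omega_x\bigl[f(X_{t\theta_n}/n)\bigr]-\mathbf E^{\mathbb X}_{x/n}\bigl[f(\mathbb X(t))\bigr]\Bigr),
\]
so the target quantity is $n^{-d}\sum_{x\in\Z^d}|h_n(x)|$. The identity $|a|=a+2a^-$ together with the pointwise bound $h_n(x)^- \le \IND_{\Omega_0}(\tau_x\omega)\, \mathbf E^{\mathbb X}_{x/n}[f(\mathbb X(t))]$, which follows from $f\ge 0$, gives
\[
\frac{1}{n^d}\sum_{x\in\Z^d}|h_n(x)|\;=\; \frac{1}{n^d}\sum_{x\in\Z^d}h_n(x)\;+\;\frac{2}{n^d}\sum_{x\in\Z^d}h_n(x)^-.
\]
The first right-hand term vanishes by hypothesis \eqref{eq:conver_diff_on_compacts}. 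For the second, I would split over $\{|x|\le Kn\}$ and its complement, with $K>0$ arbitrary: the inner piece is majorised by $n^{-d}\sum_{|x/n|\le K}|h_n(x)|$, which vanishes as $n\to \infty$ by the first half of the corollary applied with $g:=f$, uniformly continuous because continuous with compact support, and $\cA:=\overline{B(0,K)}$; the outer piece is dominated by the tail appearing in \eqref{eq:integrability}, which vanishes upon sending first $n\to\infty$ and then $K\to\infty$.

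The main obstacle lies in the first step: the evaluation functional $w\mapsto w(t)$ is neither continuous nor uniformly continuous on the Skorokhod space, so Theorem~\ref{th:quenched_l1_conv_compacts} cannot be invoked as a black box for $G(w):=g(w(t))$. The additional assumption that $\mathbb X$ has continuous sample paths is exactly what allows one to upgrade $J_1$-convergence in the Skorokhod coupling to pointwise convergence at time $t$, after which the modulus of continuity of $g$ takes the role of the Lipschitz constant from the original proof. Once \eqref{eq:conver_L1_on_compacts} is secured, the remainder of the derivation is a one-line $L^1$ bookkeeping made possible by the non-negativity of $f$.
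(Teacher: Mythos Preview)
Your argument is correct and follows the paper's approach closely. For \eqref{eq:conver_L1_for_compactly_supported-ftcs} the paper does exactly what you do: the identity $|a|=a+2\max\{-a,0\}$, a split over $\{|x|\le kn\}$ and its complement, and the three pieces handled by \eqref{eq:conver_diff_on_compacts}, \eqref{eq:conver_L1_on_compacts}, and \eqref{eq:integrability} respectively. For \eqref{eq:conver_L1_on_compacts} the paper simply declares it ``an immediate consequence of Theorem~\ref{th:quenched_l1_conv_compacts} and the continuity of the paths of $\mathbb X$, ensuring continuity of the one-time projections''; you are right that $G(w)=g(w(t))$ is not uniformly continuous on $\cD$, so Theorem~\ref{th:quenched_l1_conv_compacts} cannot be quoted verbatim, and your explicit re-entry into its proof (Skorokhod coupling, $J_1$-convergence to a continuous limit upgraded to pointwise convergence at $t$, then the modulus of continuity of $g$) is exactly the content the paper is gesturing at with that phrase.
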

\begin{proof}
	The first claim, namely \eqref{eq:conver_L1_on_compacts}, is an immediate consequence of Theorem \ref{th:quenched_l1_conv_compacts} and the continuity of the paths of the limiting process $\bbX$, ensuring continuity of the one-time projections.
The second claim follows by combining \eqref{eq:conver_L1_on_compacts} with the arguments in the proof of \cite[Proposition 5.3]{redig_symmetric_2020}, which we briefly recall here for the reader's convenience.

Fix $f\in \cC_c^+(\R^d)$. Using $|a|=a+2\max\{-a,0\}$, $a\in \R$, one splits, for all $k>0$,  the expression in \eqref{eq:conver_L1_for_compactly_supported-ftcs} as	follows:
\begin{align*}
	&			\frac{1}{n^d} \sum_{x\in \Z^d} \IND_{\Omega_0}(\tau_x\,\omega) \left( \mathbf E^{\omega}_x[ f(X_{t\theta_n}/n) ] - \mathbf E_{x/n}^{\mathbb X}[f(\mathbb X(t))]     \right)\\
	&\quad +\frac{2}{n^d} \sum_{\substack{x\in \Z^d\\
			|x|\le kn}} \IND_{\Omega_0}(\tau_x\,\omega)\max\left\{\mathbf E_{x/n}^{\mathbb X}[f(\mathbb X(t))]   -\mathbf E^{\omega}_x[ f(X_{t\theta_n}/n) ]\,,\,0	\right\}\\
	&\quad +\frac{2}{n^d} \sum_{\substack{x\in \Z^d\\
			|x|> kn}} \IND_{\Omega_0}(\tau_x\,\omega)\max\left\{\mathbf E_{x/n}^{\mathbb X}[f(\mathbb X(t))]   -\mathbf E^{\omega}_x[ f(X_{t\theta_n}/n) ]\,,\,0	\right\}\ .
\end{align*}
The first term vanishes as $n\to \infty$ by the assumption \eqref{eq:conver_diff_on_compacts}; the second one, for every $k>0$, as $n\to \infty$ by \eqref{eq:conver_L1_on_compacts};  the third one is bounded by twice the expression in \eqref{eq:integrability}, which vanishes taking first $n\to \infty$ and then $k\to \infty$. 
\end{proof}
\subsection{Examples}\label{section: examples}
We list here six examples for which both Theorem \ref{th:quenched_l1_conv_compacts} and Corollary \ref{cor:quenched_l1_conv_compacts} apply,  the limiting process $\mathbb X$ having continuous sample paths. We divide these examples into two sub-classes, depending on the type of space-time scaling involved. In either case, we always consider environments as in \eqref{eq: environment} and satisfying  Assumption \ref{ass:ergodic}, with $\cF$ being the product Borel $\sigma$-field.

\subsubsection{Diffusive scaling}
Assumption \ref{ass:QIP} holds for $\theta_n=n^2$, $\Omega_0=\Omega$, and $\mathbb X$  a $d$-dimensional Brownian motion with a non-degenerate $\Q$-a.s.\ constant covariance matrix, if the random environment $\omega$ fulfills either one of the following conditions:
\begin{itemize}
	\item[(1)] \emph{moment conditions, $d\ge 1$:} $\omega_{xy}$ symmetric,   nearest-neighbor,   satisfying:
	\begin{align*}
		&\E_\Q\big[\omega_{xy}\big]<\infty\quad\text{and}\quad
		\E_\Q\big[\omega_{xy}^{-1}\big]<\infty\ ,\qquad d=1,2\ ,\\
		&\E_\Q\big[\omega_{xy}^p\big]
		<\infty\quad\text{and}\quad
		\E_\Q\big[\omega_{xy}^{-q}\big]<\infty\ ,\qquad d\ge 3\ \ ,
	\end{align*}	
for $p, q \in (1,\infty]$,  $\frac{1}{p}+\frac{1}{q}<\frac{2}{d-1}$, with either  $\nu_x = 1$ or $\nu_x=\sum_y\omega_{xy}$ for all $x\in \bbZ^d$; see \cite{biskup_recent_2011} for $d=1,2$, \cite{bella_schaeff20} for $d\ge 3$;
	\item[(2)] \emph{elliptic \& i.i.d., $d\ge2$:}   $\omega_{xy}$ symmetric, nearest-neighbor, \textit{i.i.d.}, satisfying
	\begin{equation*}\Q(\omega_{xy}\ge c)=1\ ,\quad\text{for some}\ c >0\ ,	
	\end{equation*}
	and  $\nu_x = 1$ for all $x\in \bbZ^d$; see \cite{barlow_invariance_2010};
	\item[(3)] \emph{long-range, $d\ge 2$:} $\omega_{xy}$ symmetric,  satisfying
	\begin{equation*}
		\E_\Q\bigg[\bigg(\sum_{x\in \Z^d}\omega_{0x}|x|^2\bigg)^p\bigg]<\infty\quad\text{and}\quad \E_\Q\bigg[\big(1/\omega_{0x}\big)^q\bigg]<\infty\ ,\ |x|=1\ ,
	\end{equation*}
for $p,q \in (1,\infty)$, $\frac{1}{p}+\frac{1}{q}<\frac{2}{d}$; moreover, $\nu_x=\sum_y \omega_{xy}$ for all $x\in \bbZ^d$ and $\E_\Q[\omega_{00}]<\infty$; see \cite{BiskupChenKumgaiWang2021}; 
	\item[(4)] \emph{balanced \& i.i.d., $d\ge 1$:} $\tilde \omega_x(y):=(\tau_x\,\omega)_{0y}$, satisfying, for every  $y\in \Z^d$ with $|y|=1$,  $\tilde \omega_z(y)=\tilde \omega_z(-y)>0$, and being i.i.d.; further, $\nu_x=\sum_y\tilde \omega_x(y)$; see \cite{BergerDeuschel2014}.
\end{itemize}

\subsubsection{Sub-diffusive scaling}\label{sec:examples-sub-diffusive}Assumption \ref{ass:QIP} holds for 
\begin{equation}
	\label{eq:theta-n}\theta_n=n^{d/\beta}\  \text{if}\ d\ge 3\ ,\qquad \text{or}\quad \theta_n= n^{d/\beta}\log^{1-1/\beta}(n)
  \  \text{if}\ d=2\ ,
  \end{equation} $\Omega_0=\Omega$,	 and $\mathbb X:=$  a multiple of the Fractional Kinetics process if $\omega$ fulfills either one of the following conditions, for some $\beta \in (0,1)$ and $c_1,c_2 >0$ (see \cite{BarlowCerny} for $d\ge 3$,  \cite{cernyEJP} for $d=2$):
\begin{itemize}
	\item[(5)] \emph{heavy-tailed conductances:} $\omega_{xy}$ symmetric, nearest-neighbor, satisfying, 
		\begin{equation}\Q(\omega_{xy}>u)=c_1u^{-\beta}(1 + o(1))\ \text{as}\ u\to \infty\ ,	\quad\text{and}\quad \Q(\omega_{xy}\ge c_2)=1\ ;
			\end{equation}
		here, $\nu_x=\sum_y\omega_{xy}$;

	\item[(6)] \emph{heavy-tailed site-weights:}  $\omega_{xy}:=\alpha_x^{a-1}\alpha_y^a$, with $a\in[0,1]$ and $\alpha_x$ i.i.d.\ satisfying
		\begin{equation}\label{eq:assumption Q BTM}
			\Q(\alpha_x>u)=c_1u^{-\beta}(1 + o(1))\ \text{as}\ u\to \infty\ ,	\quad\text{and}\quad \Q(\alpha_x\ge c_2)=1\ ;
	\end{equation} 
here, $\nu_x = 1$ for all $x\in \bbZ^d$.
\end{itemize}

The list above is far to be complete,  and we refer the interested reader also to, e.g., \cite{andres_deuschel_slowik_2015, DeuschelNguyenSlowik18, Deuschel_Guo_Ramirez_2018}, the variable-speed random walk discussed in Section \ref{section: SEP iid conductances} (cf.\  \eqref{eq:generator-RCM}), as well as to examples of time-dependent environments, e.g., \cite{andres_chiarini_deuschel_slowik_2018}.

\section{Applications to exclusion processes}\subsection{SSEP with iid conductances}\label{section: SEP iid conductances}
Consider $\Z^d$, $d\ge2$,  and let $E_d$ denote the set of unoriented nearest-neighbor bonds of $\Z^d$.  Let $\omega=(\omega_e)_{e\in E_d}$ be i.i.d.\ random variables on $[0,\infty)$, for which we assume \begin{equation}\label{eq:percolation-threshold}
	\Q(\omega_e>0)>p_c\ ,
	\end{equation} 
where  $p_c=p_c(d)\in (0,1)$ denotes the critical probability for i.i.d.\ bond percolation on $\Z^d$. 	
Define $\cO(\omega):=\{e \in E_d : \omega_e >0\}$, and, for $\Q$-a.e.\ $\omega$, set $\cC(\omega)\subset \Z^d$ to be the unique infinite connected supercritical percolation cluster of sites with at least one adjacent bond in $\cO(\omega)$. Finally,  define $\Omega_0:=\{\omega\in \Omega\ :\ 0\in \cC(\omega)\}$; then $\Q(\Omega_0)>0$.

For any $\omega$ sampled according to $\Q$,  we consider $(\eta_t)_{t\ge 0}$ as the simple symmetric exclusion process on $\cC(\omega)$ with conductances $(\omega_e)_{e\in E_d}$, i.e., the Markov process with state space $\Xi:=\{0,1\}^{\cC(\omega)}$ and generator given, for all local functions $\varphi:\Xi\to \R$, by 
\begin{align}\label{equation generator SSEP iid cond}
\cL^\omega \varphi(\eta):=\sum_{\substack{xy\in E_d\\
		x,y\in \cC(\omega)
}} \omega_{xy}\left(\varphi(\eta^{xy})-\varphi(\eta)\right)\ .
\end{align}
Here, $\eta^{xy}\in \Xi$ denotes the configuration obtained from $\eta$ by exchanging $\eta(x)$ with $\eta(y)$.  Since the single-particle system is $\Q$-a.s.\  conservative \cite[Lemma 2.11]{barlow_invariance_2010},  also the particle system $\eta_t$ is $\Q$-a.s.\ 	well-defined (see, e.g., \cite[Appendix A]{Chiarini_Flo_redig_sau}). Further, $\P_\eta$ denotes the law of $\eta_t$ when $\eta_0=\eta$, while, for a probability distribution $\mu$ on $\{0,1\}^{\cC(\omega)}$,  we write $\P_\mu:=\mu(\dd \eta)\P_\eta$.

It is well-known (see, e.g., \cite[Proposition 3.14]{chen-croydon-kumagai_quenched_2015}) that $m_n:=\frac{1}{n^d}\sum_{x\in \cC(\omega)}\delta_{x/n}$  converges vaguely in a $\Q$-a.e.\ sense to a deterministic multiple of the Lebesgue measure of $\R^d$. More precisely, setting $q=q(\Q,d):=\Q(0\in \cC(\omega)) \in (0,1)$, $\Q$-a.s., the following convergence
\begin{align}\label{eq:conv-measures-cluster}
	\int_{\R^d} f\, \dd m_n \underset{n\to \infty}\longrightarrow q\int_{\R^d} f\, \dd x\ ,\qquad f \in \cC^+_c(\R^d)\ ,
\end{align}
holds.
Our goal is to determine the scaling limit of the diffusively rescaled empirical density fields of the particle system on $\cC(\omega)$, i.e., 
\begin{equation}
\cX^n_t\ :=\ \frac{1}{n^d}\sum_{x \in \cC(\omega)}\eta_{tn^{2}}(x)\, \delta_{x/n}\ ,\qquad t\ge 0\ .
\end{equation}
In what follows, we view $\cX^n_t$ as a random measure in $\cM_v(\R^d)$, i.e., the space of locally-finite measures endowed with the vague topology. Moreover, we write $\cD([0,\infty);\cM_v(\R^d))$ for the space of  $\cM_v(\R^d)$-valued \emph{c\`{a}dl\`{a}g} paths endowed with the $J_1$-Skorokhod topology.
\begin{proposition}\label{th:HL-ssep}
	Let $d\ge2$ and $\omega=(\omega_e)_{e\in E_d}$ be i.i.d.\ and fulfilling \eqref{eq:percolation-threshold}. Assume that, $\Q$-a.s., a sequence of probability distributions $(\mu_n)_n$ on $\{0,1\}^{\cC(\omega)}$ satisfies
	\begin{equation}\label{eq: assumption initial condition}
\lim_{n\to \infty}\P_{\mu_n}\left(\left|\int_{\R^d}f\,\dd \cX^n_0 - q\int_{\R^d} f\, \gamma\,\dd x\right|>\varepsilon\right)=0\ ,\qquad f\in \cC_c^+(\R^d)\ ,\ \varepsilon >0\ ,
	\end{equation}
for some deterministic and continuous $\gamma :\R^d\to [0,1]$. Then, there exists $\sigma=\sigma(\Q,d)>0$ such that, $\Q$-a.s., the family $((\cX^n_t)_{t\ge 0})_n$ is  tight in $\cD([0,\infty);\cM_v(\R^d))$ and satisfies
	\begin{equation}\label{eq:hl}
	\lim_{n\to \infty}\P_{\mu_n}\left(\left|\int_{\R^d}f\,\dd \cX^n_t - q\int_{\R^d} f\, \rho_t\,\dd x\right|>\varepsilon\right)=0\ ,\qquad t >0\ ,\ f\in \cC_c^+(\R^d)\  ,\ \varepsilon >0\ ,
\end{equation}
  where $(\rho_t)_{t\ge 0}$ is the unique strong solution, on $\R^d$, to
\begin{equation}\label{eq: heat eq}
	\partial_t \rho_t= \sigma^2 \Delta \rho_t\ ,\quad \text{with}\ \rho_0=\gamma\ .
\end{equation}
\end{proposition}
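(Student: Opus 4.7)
The plan has two parts: obtain quenched $L^1$-convergence of the one-particle semigroup from Corollary \ref{cor:quenched_l1_conv_compacts}, and then lift it to the hydrodynamic limit via self-duality of SSEP, moment estimates, and a tightness argument. The single-particle dynamics is the variable-speed random walk (VSRW) on $\cC(\omega)$ with generator $A^\omega f(x) = \sum_y \omega_{xy}(f(y)-f(x))$. Assumption \ref{ass:ergodic} is immediate from the i.i.d.\ structure of the environment, and Assumption \ref{ass:QIP} is the known quenched invariance principle from the origin for the VSRW on the supercritical percolation cluster (Mathieu--Piatnitski, Barlow--Deuschel), giving $\theta_n = n^2$, $\Omega_0 = \{0 \in \cC(\omega)\}$ (so $\Q(\Omega_0) = q > 0$), and $\bbX = \sigma W$ with $W$ a standard $d$-dimensional Brownian motion and $\sigma = \sigma(\Q, d) > 0$ by the lattice symmetries of $\Q$.

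To apply Corollary \ref{cor:quenched_l1_conv_compacts} to $f \in \cC_c^+(\R^d)$ and $t > 0$, write $P_t f := \mathbf{E}^{\bbX}_{\cdot}[f(\bbX_t)]$ for the limiting (Gaussian) semigroup. Condition \eqref{eq:integrability} is immediate from the Gaussian decay of $P_t f$ combined with the trivial cluster volume bound. For \eqref{eq:conver_diff_on_compacts}, reversibility of the VSRW with respect to counting measure on $\cC(\omega)$ (which holds since $\omega_{xy} = \omega_{yx}$ and $\nu_x \equiv 1$) yields the swap
\[
\sum_{x\in\cC(\omega)} \mathbf{E}^\omega_x[f(X_{tn^2}/n)] = \sum_{y\in\cC(\omega)} f(y/n),
\]
whose normalization by $n^{-d}$ converges to $q\int f$ via \eqref{eq:conv-measures-cluster}. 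The second summand $n^{-d}\sum_{x\in\cC(\omega)} P_t f(x/n)$ integrates $P_t f$ against the cluster measure $m_n$; truncating $P_t f$ outside a large ball (using its Gaussian decay) together with $\int P_t f = \int f$ gives the same limit $q\int f$, so Corollary \ref{cor:quenched_l1_conv_compacts} yields
\[
\Delta_n := \frac{1}{n^d}\sum_{x\in\cC(\omega)} \bigl|\mathbf{E}^\omega_x[f(X_{tn^2}/n)] - P_t f(x/n)\bigr| \xrightarrow{n\to\infty} 0 \quad \Q\text{-a.s.}
\]

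For the mean, self-duality of SSEP combined with the reversibility swap gives
\[
\E_{\mu_n}\bigl[\langle f, \cX^n_t\rangle\bigr] = \frac{1}{n^d}\sum_{y\in\cC(\omega)} \mu_n(\eta_0(y) = 1)\, \mathbf{E}^\omega_y[f(X_{tn^2}/n)].
\]
Replacing $\mathbf{E}^\omega_y[f(X_{tn^2}/n)]$ by $P_t f(y/n)$ at cost $\le \Delta_n \to 0$, then truncating $P_t f$ to apply the assumption \eqref{eq: assumption initial condition} (legitimated by Gaussian decay and $\mu_n(\eta_0(y) = 1) \le \IND_{\cC(\omega)}(y)$), the limit equals $q \int \gamma\, P_t f = q \int \rho_t f$ by self-adjointness of the Gaussian semigroup and $P_t\gamma = \rho_t$. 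The variance I would handle by conditioning on $\eta_0 \sim \mu_n$: the ``variance of conditional expectation'' part equals $\mathrm{Var}_{\mu_n}(\langle P_t f, \cX^n_0\rangle) + o(1)$ and vanishes thanks to \eqref{eq: assumption initial condition} (after the same truncation); the ``expectation of conditional variance'' part is $O(1/n^d)$ via preservation of negative association of SSEP starting from a deterministic configuration, which forces $\mathrm{Cov}(\eta_{tn^2}(x), \eta_{tn^2}(y) \mid \eta_0) \le 0$ for $x \ne y$. This gives the convergence in probability \eqref{eq:hl}.

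For tightness in $\cD([0,\infty); \cM_v(\R^d))$ I would invoke Jakubowski's criterion, reducing to real-valued tightness of $t \mapsto \langle f, \cX^n_t\rangle$ for each $f \in \cC_c^\infty(\R^d)$. Dynkin's formula gives $\langle f, \cX^n_t\rangle = \langle f, \cX^n_0\rangle + \int_0^t \langle n^2 A^\omega_n f, \cX^n_s\rangle\, ds + M^n_t$, where $A^\omega_n$ denotes the discrete conductance Laplacian acting on $f$ (bounded uniformly on the support of $f$ by standard heat-kernel estimates for the VSRW on $\cC(\omega)$) and $M^n$ has quadratic variation of order $1/n^d$; Aldous' criterion then concludes. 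I expect the main technical obstacle to be the verification of \eqref{eq:conver_diff_on_compacts} and the subsequent truncation of $P_t f$ in the mean computation: the non-compact support of $P_t f$ prevents any direct pointwise check based on $\Delta_n$ alone, and one must simultaneously exploit the Gaussian tail of $P_t f$, the cluster-density asymptotics \eqref{eq:conv-measures-cluster}, and mass-conservation of the limiting heat semigroup.
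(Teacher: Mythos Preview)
Your derivation of the semigroup $L^1$-convergence from Corollary~\ref{cor:quenched_l1_conv_compacts} is correct and matches the paper: you verify \eqref{eq:conver_diff_on_compacts} via the reversibility swap $\sum_{x\in\cC(\omega)}P^n_tf(x/n)=\sum_{y\in\cC(\omega)}f(y/n)$ and \eqref{eq:conv-measures-cluster}, and \eqref{eq:integrability} via the Gaussian tail of $S^\sigma_tf$. For the one-time marginals your mean/variance decomposition is a legitimate alternative to the paper's route, which instead shows $\int f\,\dd\cX^n_t-\int P^n_tf\,\dd\cX^n_0\to 0$ in $L^2$ directly (same negative-dependence bound as your conditional-variance term), then replaces $P^n_tf$ by $S^\sigma_tf$ using $\cX^n_0\le m_n$ and $\Delta_n\to 0$, and finally applies \eqref{eq: assumption initial condition} after truncation. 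Both arguments are equivalent in substance.

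The tightness argument, however, has a genuine gap. You write Dynkin's formula and claim that the drift $n^2 A^\omega_n f$ is ``bounded uniformly on the support of $f$ by standard heat-kernel estimates''. This fails under the sole assumption \eqref{eq:percolation-threshold}: the conductances carry \emph{no moment condition}, so $n^2\sum_{y}\omega_{xy}(f(y/n)-f(x/n))$ is unbounded pointwise (the second-order Taylor gain $O(n^{-2})$ is wiped out by arbitrarily large $\omega_{xy}$), and heat-kernel bounds on $p_t(x,y)$ say nothing about $A^\omega f$. Even after averaging against $\cX^n_s\le m_n$ you would need at least $\E_\Q[\omega_e]<\infty$ to control the drift via the ergodic theorem, which is not assumed. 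Aldous' criterion in the form you propose therefore does not go through.

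The paper circumvents the generator entirely: it uses an Aldous-type modulus of continuity with
\[
\psi_n(h)=C_1\sqrt{\|f\|_{2,n}^2-\|P^n_{h/2}f\|_{2,n}^2}+C_2 n^{-d/2}\|f\|_{2,n}\ ,
\]
which is monotone in $h$ and, by the identity $\|P^n_{h/2}f\|_{2,n}^2=\frac{1}{n^d}\sum_{x\in\cC(\omega)}f(x/n)\,P^n_hf(x/n)$ together with the already-established $L^1$-convergence \eqref{eq: L1 convergence RCM} and \eqref{eq:conv-measures-cluster}, satisfies $\limsup_n\psi_n(h)\to 0$ as $h\to 0$. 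This relies only on semigroup information (contractivity and the $L^1$-limit), not on any pointwise control of $A^\omega f$, and is precisely what is needed when the conductances are unbounded.
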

We emphasize that it suffices to assume that the  i.i.d.\ conductances percolate (in particular,  without any moment assumptions) so to ensure that  the quenched hydrodynamic limit is non-degenerate. This result  comes as a direct consequence of Corollary \ref{cor:quenched_l1_conv_compacts}, self-duality of the symmetric exclusion process, and the strategy developed in \cite{nagy_symmetric_2002}  and further refined in, e.g., \cite{faggionato_bulk_2007,faggionato2022hydrodynamic,redig_symmetric_2020,Chiarini_Flo_redig_sau}.
We highlight the main steps below.

\smallskip

Fix a realization of the environment $\omega$ sampled according to $\Q$,  and consider the variable-speed random walk $X=(X_t)_{t\ge 0}$ on $\cC(\omega)\subset\Z^d$, having
\begin{equation}\label{eq:generator-RCM}
A^\omega f(x)=\sum_{y\in \cC(\omega)}\omega_{xy}\left(f(y)-f(x)\right)\ ,\qquad x \in \cC(\omega)\ .
\end{equation}
as its infinitesimal generator.
Recall that $\mathbf P^{\omega}_x$ and $\mathbf E^{\omega}_x$ denote the law and corresponding expectation of $X_t$ when $X_0=x\in \cC(\omega)$. Moreover, let, for all $n \in \N$ and continuous bounded functions $g\in \cC_b(\R^d)$,
\begin{equation}\label{eq:Pnt}
	P^n_t g(x/n):= 	\mathbf E^{\omega}_x\left[g(X_{tn^2}/n)\right]\ ,\qquad x \in \cC(\omega)\ ,\ t \ge 0\ ,	
\end{equation}
denote the semigroup of the diffusively rescaled random walk $X^n=(X_{n^2t}/n)_{t\ge 0}$.

We now recall the scaling limit of the  random conductance model with i.i.d.\ unbounded conductances,  first obtained in \cite{barlow_invariance_2010} under the assumption that $\Q(\omega_{xy}\ge 1)=1$, and
 further generalized in \cite{andres_invariance2013}. Here,  $B^\sigma=(B^{\sigma}_t)_{t\ge 0}$ denotes a $d$-dimensional Brownian motion with diffusion matrix $\sigma^2 \boldsymbol I$, while  $( S^{\sigma}_t)_{t\ge 0}$ its semigroup.
\begin{theorem}[{\cite[Theorem 1.1]{andres_invariance2013}}]\label{theorem: QIP RCM}
Let $d\ge2$ and $\omega=(\omega_e)_{e\in E_d}$ be i.i.d.\ and fulfilling \eqref{eq:percolation-threshold}. Then, for  $\Q(\,\cdot\mid\Omega_0)$-a.e.\ $\omega$, $X^n$  under $\P^{\omega}_0$ converges in law to $B^{\sigma}$ with $B^\sigma_0=0$.
\end{theorem}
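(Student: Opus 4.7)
The plan is to follow the \emph{corrector method} that has become standard for quenched invariance principles on supercritical percolation clusters with i.i.d.\ conductances. Throughout, we work under $\Q(\,\cdot\mid \Omega_0)$, so that $0\in \cC(\omega)$, and denote by $\pi_\omega$ the reversible measure for $X$ given by $\pi_\omega(x)=1$ on $\cC(\omega)$ (consistent with the variable-speed choice $\nu_x\equiv 1$ in \eqref{eq:generator-RCM}).

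First, I would set up the \emph{environment viewed from the particle} $(\tau_{X_t}\,\omega)_{t\ge 0}$ on $\Omega_0$. Standard arguments (reversibility of $X$ together with Assumption \ref{ass:ergodic} and ergodicity of the infinite cluster) show that this process is stationary and ergodic under the conditional law $\Q(\,\cdot\mid\Omega_0)$, which is the input needed to invoke the Kipnis--Varadhan machinery. Next, I would construct, coordinate-wise, a \emph{corrector} $\chi:\Omega_0\times \cC(\omega)\to \R^d$ such that
\begin{equation*}
\Phi_\omega(x):= x+\chi(\omega,x),\qquad x\in \cC(\omega),
\end{equation*}
is $A^\omega$-harmonic, i.e.\ $A^\omega \Phi_\omega\equiv 0$ on $\cC(\omega)$, with the cocycle identity $\chi(\omega,x+y)-\chi(\omega,x)=\chi(\tau_x\,\omega,y)$. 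The construction proceeds via an $L^2$-projection of the position field onto the closed subspace of gradients of local functions in the Hilbert space of square-integrable cocycles associated to the Dirichlet form of $A^\omega$; finiteness of the relevant Dirichlet energy only requires $\E_\Q[\omega_e]<\infty$ on the set of occupied bonds, which is automatic since $\omega_e\in[0,\infty)$ can be truncated and controlled via percolation on $\{\omega_e>0\}$.

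With the corrector in hand, the decomposition
\begin{equation*}
\Phi_\omega(X_t)\ =\ X_t+\chi(\omega,X_t)\ =:\ M_t
\end{equation*}
defines a $\Q$-a.s.\ square-integrable martingale under $\mathbf{P}^\omega_0$. The rescaled martingale $M^n_t:=M_{tn^2}/n$ has predictable quadratic variation
\begin{equation*}
\langle M^n\rangle_t\ =\ \frac{1}{n^2}\int_0^{tn^2}\Psi(\tau_{X_s}\,\omega)\,\dd s,
\end{equation*}
where $\Psi(\omega):=\sum_{y\sim 0}\omega_{0y}(e+\chi(\omega,y))(e+\chi(\omega,y))^\top$ is an integrable function of the environment. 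By ergodicity of the environment from the particle and the pointwise ergodic theorem, $\langle M^n\rangle_t\to t\,\Sigma$ for a deterministic matrix $\Sigma$; isotropy of the i.i.d.\ law under lattice symmetries forces $\Sigma=\sigma^2\boldsymbol I$ for some $\sigma>0$, with $\sigma>0$ because the infinite cluster has positive density and supports nontrivial harmonic cocycles. The Lindeberg-type condition on jumps follows from $\sup_s |M^n_s-M^n_{s-}|\le n^{-1}(1+\max_{x\sim X_{s-}}|\chi(\omega,x)-\chi(\omega,X_{s-})|)$ combined with a standard truncation. The functional martingale CLT then yields $M^n\Rightarrow B^\sigma$ in $\cD([0,\infty);\R^d)$ under $\mathbf{P}^\omega_0$, for $\Q(\,\cdot\mid \Omega_0)$-a.e.\ $\omega$.

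The final and hardest step is to transfer this convergence from $M^n$ to $X^n=X_{tn^2}/n$ by proving \emph{sublinearity of the corrector} on the cluster:
\begin{equation*}
\lim_{n\to\infty}\ \max_{\substack{x\in \cC(\omega)\\ |x|\le n}}\frac{|\chi(\omega,x)|}{n}\ =\ 0,\qquad \Q(\,\cdot\mid\Omega_0)\text{-a.s.}
\end{equation*}
Granted this, one bounds $|X^n_t-M^n_t|\le n^{-1}|\chi(\omega,X_{tn^2})|$ and uses tightness of $M^n$ to localize $X_{tn^2}/n$ in a ball of fixed radius, so $\sup_{t\le T}|X^n_t-M^n_t|\to 0$ in probability, delivering $X^n\Rightarrow B^\sigma$. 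The hard part is precisely this sublinearity: mean-zero sublinearity on $\Z^d$ along cocycle directions is a direct consequence of the spatial ergodic theorem applied to $\chi$ seen as a cocycle, but upgrading this to a \emph{maximal} (uniform in $|x|\le n$) statement on the random cluster $\cC(\omega)$ requires serious input. The strategy I would use, following \cite{barlow_invariance_2010,andres_invariance2013}, is to combine (i) Barlow's on-diagonal and Gaussian heat kernel bounds for the variable-speed walk on the supercritical cluster together with parabolic Harnack/oscillation estimates, which turn mean-zero sublinearity into maximal sublinearity, and (ii) a coarse-graining/renormalization argument on ``good'' boxes (where the cluster looks well-connected and the conductance field is not too degenerate), handling bad boxes by percolation estimates. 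No moment condition on the positive conductances is needed thanks to the variable-speed choice $\nu_x\equiv 1$, which is what allows the result to hold under only \eqref{eq:percolation-threshold}. This completes the proof up to the quenched set $\Omega_0$, and the restriction to $\Q(\,\cdot\mid\Omega_0)$-a.e.\ $\omega$ in the statement reflects the fact that $X$ is defined only when $0\in \cC(\omega)$.
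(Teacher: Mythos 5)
First, a framing remark: the paper does not prove this statement at all. It is imported verbatim as \cite[Theorem 1.1]{andres_invariance2013}, and the surrounding text uses it as a black box to verify Assumption \ref{ass:QIP}. So the only meaningful comparison is with the proof in that reference, whose skeleton your outline does track correctly: environment viewed from the particle, corrector/harmonic embedding, martingale FCLT for $\Phi_\omega(X_t)$, and everywhere-sublinearity of the corrector as the hard step. You also correctly identify why the variable-speed choice $\nu_x\equiv 1$ matters and why the statement is conditioned on $\Omega_0$. However, two of your steps would fail as written, and they are precisely the points where \cite{andres_invariance2013} had to do genuinely new work.

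First, your claim that finiteness of the Dirichlet energy ``only requires $\E_\Q[\omega_e]<\infty$ \dots which is automatic'' is wrong. The sole assumption is \eqref{eq:percolation-threshold}; no moment condition is imposed, and $\E_\Q[\omega_e]=+\infty$ is allowed --- indeed Section \ref{section: SEP iid conductances} of the paper emphasizes ``without any moment assumptions'' as the whole point. Consequently the position cocycle has infinite norm in the space of square-integrable forms weighted by $\Q(\dd\omega)\,\omega_e$, so the vector you propose to project is not in the ambient Hilbert space and the $L^2$-projection does not even start. The harmonic embedding itself does have finite energy (harmonicity forces gradients of order $\omega_e^{-1}$ across very strong edges), but exhibiting a finite-energy object with the correct linear part requires an argument --- e.g., collapsing the a.s.\ finite clusters of edges with $\omega_e>K$ (subcritical for large $K$) to build a finite-energy competitor, plus a truncation/compactness scheme --- and this is real content of \cite{andres_invariance2013}, not a parenthetical. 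The same issue recurs downstream: integrability of your $\Psi$, needed for the ergodic theorem applied to $\langle M^n\rangle_t$, is exactly finiteness of the embedding's energy, so it cannot be waved through either.

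Second, step (i) of your sublinearity argument invokes Gaussian heat kernel bounds and a parabolic Harnack inequality for $X$ on $\cC(\omega)$. These are not available under \eqref{eq:percolation-threshold} alone: conductances on the cluster may be arbitrarily close to $0$, sites surrounded by weak edges trap the variable-speed walk, and anomalous (non-Gaussian) heat-kernel decay genuinely occurs for such i.i.d.\ laws (Berger--Biskup--Hoffman--Kozma); the estimates of \cite{barlow_invariance_2010} require $\omega_e\ge 1$ on open edges. This is why \cite{andres_invariance2013} do not run Harnack-type estimates for $X$ itself: they fix $\xi>0$ small enough that $\Q(\omega_e\ge \xi)>p_c$, prove the needed estimates and sublinearity for (a time change of) the walk on the cluster of $\xi$-strong edges, and then extend across the ``holes'' --- the finite components attached through weak edges, whose sizes have good percolation tail bounds. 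Finally, a minor point: non-degeneracy $\sigma>0$ is not simply because the cluster ``supports nontrivial harmonic cocycles''; the standard route derives it from everywhere-sublinearity itself (if $\Sigma$ degenerated in direction $v$, then $v\cdot x=-v\cdot\chi(\omega,x)$ would be sublinear, a contradiction). In short: right architecture, correct identification of the hardest step, but the two load-bearing technical inputs you assume (finite energy of the position field; Gaussian bounds/Harnack on the full cluster) are false in this generality, and replacing them is exactly what the cited proof does.
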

As a consequence of the above result, Assumptions \ref{ass:ergodic} and \ref{ass:QIP}, and, thus, \eqref{eq:conver_L1_on_compacts}, hold true in this case.
Furthermore,
\begin{itemize}
	\item  due to symmetry of $P^n_t$ with respect to the counting measure on $\cC(\omega)$, \eqref{eq:conv-measures-cluster}  and the symmetry of $S^\sigma_t$  with respect to the Lebesgue measure on $\R^d$, condition \eqref{eq:conver_diff_on_compacts} holds true in this case;
	\item $S^\sigma_t: \cC_c^+(\R^d)\to \cC_b(\R^d)\cap L^1(\R^d)$; thus, condition \eqref{eq:integrability} holds true in this case.
\end{itemize} Hence,  \eqref{eq:conver_L1_for_compactly_supported-ftcs} in Corollary \ref{cor:quenched_l1_conv_compacts} holds true and thus, owing to  $\IND_{\Omega_0}(\tau_x\,\omega) = \IND_{x\in \cC(\omega)}$, for $\Q$-a.e.\ $\omega$, $t\ge 0$, and $f\in \cC_c^+(\R^d)$, 
	\begin{align}\label{eq: L1 convergence RCM}
	\lim_{n\to \infty}   \frac{1}{n^d} \sum_{x\in \cC(\omega)}  \left| P^n_t f(x/n) - S^\sigma_tf(x/n) \right|=0\ .	
\end{align}

The claim in \eqref{eq: L1 convergence RCM} is the main step in the proof of Proposition \ref{th:HL-ssep}.
Given \eqref{eq: L1 convergence RCM}, 	the proof of convergence of the finite-dimensional distributions for $\cX^n_t$ goes as in, e.g., \cite{nagy_symmetric_2002,faggionato_bulk_2007,floreani_HDL_2021} (see also \cite{Chiarini_Flo_redig_sau}, which bypasses the explicit graphical construction of the particle system). We just remark that for this step, one crucially exploits the fact that, for any fixed $\omega$, the symmetric exclusion process $\eta_t$  with generator given in \eqref{equation generator SSEP iid cond} and $X_t$ are in stochastic duality relation with duality function given by  $D(x,\eta)=\eta(x)$. Tightness is ensured either by the arguments in \cite[Section 5.1]{faggionato_hydrodynamic_2009} (see also \cite[Section 8]{faggionato2022hydrodynamic} for a refinement), or those in  \cite[Section 4]{Chiarini_Flo_redig_sau}. 

For the reader's convenience, we sketch the main steps of the proof of Proposition \ref{th:HL-ssep} in the following section; expert readers may jump directly to Section \ref{section: other applications}.
\subsubsection{Proof of Proposition \ref{th:HL-ssep} }\label{section: proof HDL}
By $\cX^n_0 \le m_n=\frac{1}{n^d}\sum_{x\in \cC(\omega)}\delta_{x/n}$ and \eqref{eq: L1 convergence RCM},  we get, $\Q$-a.s., 
\begin{align}\label{eq:term 2}
	\lim_{n\to \infty}	\E_{\mu_n}\left[ \left|   \int_{\R^d}  P^n_tf\, \dd\cX^n_0  -\int_{\R^d}  S^{\sigma}_{t}f\, \dd \cX^n_0 \right|\right]=0.
\end{align}
By duality and  negative dependence of ${\rm SSEP}(\omega)$, the following variance estimate holds (for a proof, see, e.g., \cite[Proposition 3.1]{Chiarini_Flo_redig_sau}): $\Q$-a.s., for all $t \ge 0$, $n \in \N$, and $f \in \cC^+_c(\R^d)$, 
\begin{align}\label{eq:negative-dependence}
\E_{\mu_n}\left[\left(\int_{\R^d} f\, \dd \cX^n_t-\int_{\R^d} P^n_t f\, \dd \cX_0^n \right)^2 \right]
\le \frac{1}{n^d}\, \int_{\R^d} f^2\, \dd m_n\xrightarrow{n\to \infty} 0\ ,
\end{align}
where the last step follows by \eqref{eq:conv-measures-cluster}, ensuring $\lim_{n\to \infty}\int f^2\, \dd m_n<\infty$.
Finally,  \eqref{eq: assumption initial condition}, $\cX^n_0\le m_n$, $S^\sigma_t f\in \cC_b(\R^d)\cap L^1(\R^d)$, $S^\sigma_t f\ge 0$,	and $\eqref{eq:conv-measures-cluster}$ yield, $\Q$-a.s., 
\begin{equation}
	\lim_{n\to \infty}\P_{\mu_n}\left(\left|\int_{\R^d} S^\sigma_t f\, \dd \cX^n_0 - q\int_{\R^d} S^\sigma_tf\, \gamma\,	 \dd x\right|>\varepsilon\right)=0\ ,\qquad \varepsilon> 0\ .
\end{equation}
The desired claim in \eqref{eq:hl} follows by the triangle inequality and  $\int S^\sigma_t f\, \gamma\, \dd x= \int f\, \rho_t\, \dd x$.

For tightness, we follow the same steps of the tightness proof in \cite{Chiarini_Flo_redig_sau}. Namely, let us fix  $f \in \cC_c^+(\R^d)$ and $\varepsilon>0$. We  need to show that for all $n\in \N$, there exists a $\Q$-measurable \emph{non-decreasing} function $\psi_n=\psi_{n,\varepsilon}:[0,T]\to [0,\infty)$ such that, $\Q$-a.s., for all $t\ge 0$, \begin{equation}\P_{\mu_n}\left(\left|\int_{\R^d} f\, \dd\cX^n_{t+h}-\int_{\R^d} f\, \dd \cX^n_t\right|>\varepsilon\,\bigg|\, \cF^n_t\right)\le \psi_n(h)\ ,\qquad   h\ge 0\ ,
\end{equation}
where $\cF^n_t:=\sigma\left( \cX^n_s: s\le t\right)$ and, $\Q$-a.s.,  
\begin{equation}
	\psi(h)\underset{h\to 0}\longrightarrow0\ ,\qquad\text{where}\ \	 \psi(h):=	\limsup_{n\to \infty}\psi_n(h)\ .
\end{equation}
Writing, for all $p \in [1,\infty)$ and $g \in \cC_b(\R^d)$,  $\left\|g\right\|_{p, n}:=  \left(\frac{1}{n^d}\sum_{x\in\cC(\omega)} |g(x/n)|^p\right)^{1/p}$ and following closely \cite[\S4]{Chiarini_Flo_redig_sau}, we have that $\psi_n(h)$ can be chosen as
\begin{align}\label{eq:psi-n-h}
	\psi_n(h):= C_1\sqrt{\big\|f\big\|_{2,n}^2-\big\|P^n_{h/2}f\big\|_{2,n}^2}+ 	C_2\,	\frac{1}{n^{d/2}}\big\|f\big\|_{2,n}\ ,
\end{align}
for some $C_1,C_2>0$ depending only on $f\in \cC_c^+(\R^d)$ and $\varepsilon>0$. (Note that $\psi_n(h)$ given above is non-decreasing in $h\ge 0$.)
The second term on the right-hand side of \eqref{eq:psi-n-h} vanishes $\Q$-a.s.\ as $n\to \infty$. As for  the first term,  by $\|P^n_{h/2}f\|_{2,n}^2=\frac{1}{n^d}\sum_{x\in\cC(\omega)}f(x/n)\,P^n_hf(x/n)$,  \eqref{eq: L1 convergence RCM} and \eqref{eq:conv-measures-cluster}, we have, $\Q$-a.s., 
\begin{equation}
	\lim_{n\to \infty} \big\|f\big\|_{2,n}^2-\big\|P^n_{h/2}f\big\|_{2,n}^2 = q\int_{\R^d} f\left(f-S^\sigma_h f\right)\dd x\ ,
\end{equation}  
which vanishes as $h\to 0$ by the strong continuity of $S^\sigma_h$ in $L^2(\R^d)$.

\subsection{Further applications}\label{section: other applications}
In this section we discuss other applications of Theorem \ref{th:quenched_l1_conv_compacts} and Corollary \ref{cor:quenched_l1_conv_compacts}.

First, observe that the quenched hydrodynamic limit for the symmetric exclusion process with symmetric random conductances satisfying either the conditions in Examples (1) and (3) from Section \ref{section: examples}  (the model is still self-dual in these cases, see, e.g., \cite[\S 4.1]{FloreaniJansenRedigWagner}) can be obtained by following the same proof strategy adopted in the previous section: in both case the hydrodynamic equation is the heat equation with a non-degenerate constant  diffusion matrix which does not depend on the realization of the environment.

As another application of Theorem \ref{th:quenched_l1_conv_compacts}, we sketch the hydrodynamic limit for the frequency field of the system of infinitely-many interacting ${\rm BTM}(a)$ analyzed in \cite{Chiarini_Flo_redig_sau}.

Briefly recalling Example (6) from Section \ref{sec:examples-sub-diffusive}, let $\alpha=(\alpha_x)_{x\in\Z^d}$ be $\N$-valued i.i.d.\ random variables sampled according to  $\Q$ satisfying \eqref{eq:assumption Q BTM}, this time with $c_1=c_2=1$. The particle system  $(\eta_t)_{t\ge 0}$ evolves on the  state space $\Xi:=\prod_{x\in\Z^d} \{0,1,\ldots, \alpha_x\}$, and its   Markov generator reads, when  acting on local functions $\varphi:\Xi\to \R$, as follows: 
\begin{align}\label{eq:generator-IPS}
	\cL^\alpha \varphi(\eta)= \sum_{x\in \Z^d}\eta(x)\, \alpha_x^{a-1}\sum_{y\,:\, |x-y|=1} \alpha_y^a\left(1-\eta(y)/\alpha_y\right) \left(\varphi(\eta^{x,y})-\varphi(\eta)\right)
	\ ,\qquad \eta \in \Xi\ ,
\end{align}
where $\eta^{x,y}:=\eta-\delta_x+\delta_y$.
 We consider the following rescaled empirical frequency fields:
\begin{equation}
t \in [0,\infty)\longmapsto \cZ^n_t:=	\frac{1}{n^d}\sum_{x\in \Z^d} \frac{\eta_{t\theta_n}(x)}{\alpha_x}\,	\delta_{x/n}\in \cM_v(\R^d)\ ,
\end{equation}
where $\theta_n=\theta_{n,\beta,d}$ is given in \eqref{eq:theta-n}.
As a direct consequence of  \eqref{eq:conver_L1_on_compacts} in Corollary \ref{cor:quenched_l1_conv_compacts}, and the main steps in the proof in \cite[Section 5]{Chiarini_Flo_redig_sau} (in particular, Remark 5.1 therein),  we obtain the following result, which is the quenched version of \cite[Proposition 2.6]{Chiarini_Flo_redig_sau} for when $d\ge 2$:
\begin{proposition}[{\cite[Proposition 2.6]{Chiarini_Flo_redig_sau}}]\label{pr:HL-IBTM}
	Let $d\ge2$. Given a uniformly continuous function $\gamma:\R^d\to [0,1]$, set, for all $n \in \N$,
	 $\mu_n=\otimes_{x\in \Z^d}\, {\rm Bin}(\alpha_x,\gamma(x/n))$ as the initial distribution  of the particle system.	Then, there exists $\sigma=\sigma(\Q,d)>0$ such that, $\Q$-a.s., 		
	\begin{align*}
	\lim_{n\to \infty}	\P_{\mu_n}\left( \left|  \int_{\R^d}  f\, \dd\cZ^n_t - \int_{\R^d}f\,\rho_t\,\dd x \right|>\varepsilon\right)=0\ ,\qquad t\ge 0\ ,\ f\in \cC_c^+(\R^d)\ ,\ \varepsilon>0\ ,
	\end{align*}
holds true, 
		where $(\rho_t)_{t\ge 0}$ is the unique bounded strong solution, on $\R^d$, to
	\begin{equation}\label{eq: FK equation}
		\frac{\partial^\beta}{\partial t^\beta}\rho_t = \sigma^{2/\beta}\Delta \rho_t\ ,\quad \text{with}\ \rho_0=\gamma\ .
	\end{equation}
	In this formula, $\frac{\partial^\beta}{\partial t^\beta}$ stands for the Caputo derivative of order $\beta \in (0,1)$, i.e., for all $t\ge 0$ and $h \in \cC^1(\R)$, 
	$\frac{\partial^\beta}{\partial t^\beta} h(t):=\frac{1}{\Gamma(1-\beta)}\int_0^t \frac{1}{(t-s)^\beta}\,h'(s)\, 	\dd s$.
\end{proposition}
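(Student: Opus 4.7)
The plan is to follow the same blueprint as the proof of Proposition \ref{th:HL-ssep} in Section \ref{section: proof HDL}, relying on Corollary \ref{cor:quenched_l1_conv_compacts} (which applies to Example (6) in Section \ref{sec:examples-sub-diffusive} since the Fractional Kinetics process has continuous sample paths and $\Omega_0=\Omega$), combined with the self-duality of the interacting BTM$(a)$ system. Specifically, the generator \eqref{eq:generator-IPS} is in duality with the one-particle BTM$(a)$ generator \eqref{eq:generator-BTM} through the duality function $D(x,\eta)=\eta(x)/\alpha_x$, so that
\begin{equation*}
	\E_{\mu_n}\!\left[\eta_{t\theta_n}(x)/\alpha_x\right]=\mathbf E^\alpha_x\!\left[\gamma(X_{t\theta_n}/n)\right]\ ,\qquad x\in \Z^d\ ,
\end{equation*}
because the product-binomial initial law $\mu_n$ satisfies $\E_{\mu_n}[\eta_0(y)/\alpha_y]=\gamma(y/n)$.

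First, I would handle the expected value of the empirical frequency field tested against $f\in \cC_c^+(\R^d)$. By the above duality,
\begin{equation*}
	\E_{\mu_n}\!\left[\int_{\R^d} f\, \dd \cZ^n_t\right]= \frac{1}{n^d}\sum_{x\in \Z^d} f(x/n)\,\mathbf E^\alpha_x\!\left[\gamma(X_{t\theta_n}/n)\right]\ .
\end{equation*}
Taking $\cA=\supp f$ and applying \eqref{eq:conver_L1_on_compacts} in Corollary \ref{cor:quenched_l1_conv_compacts} with the bounded uniformly continuous function $g=\gamma$, I can replace $\mathbf E^\alpha_x[\gamma(X_{t\theta_n}/n)]$ by $\mathbf E^{\bbX}_{x/n}[\gamma(\bbX(t))]$ at the cost of a vanishing $L^1$-error weighted by $\|f\|_\infty\IND_\cA(x/n)$. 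The remaining sum is a Riemann approximation of $\int_{\R^d} f(y)\,\rho_t(y)\,\dd y$ with $\rho_t(y):=\mathbf E^{\bbX}_y[\gamma(\bbX(t))]$; by the general theory of the Fractional Kinetics process (see, e.g., \cite{BarlowCerny}), $\rho_t$ is precisely the unique bounded strong solution of the Caputo equation \eqref{eq: FK equation}.

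Next, to upgrade the first-moment convergence to convergence in probability, I would estimate the variance. Since $\mu_n$ is a product measure and the BTM$(a)$ exclusion dynamics admits a two-point self-duality (the second-order dual consists of two interacting BTM$(a)$ walkers), the same argument as in \cite[Proposition 3.1]{Chiarini_Flo_redig_sau} (see also Remark 5.1 in \cite{Chiarini_Flo_redig_sau}) yields, $\Q$-a.s., a bound of the form
\begin{equation*}
	\var_{\mu_n}\!\left(\int_{\R^d} f\, \dd \cZ^n_t\right)\le \frac{C}{n^{2d}}\sum_{x\in \Z^d} f(x/n)^2\le \frac{C\,\|f\|_\infty^2\, |\supp f|}{n^d}\xrightarrow{n\to \infty}0\ .
\end{equation*}
Combining the first-moment convergence with this variance estimate via Chebyshev's inequality gives the claimed quenched convergence in probability.

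The main technical point is not the convergence of the first moment (Corollary \ref{cor:quenched_l1_conv_compacts} handles it cleanly), but rather verifying the two-point self-duality and the resulting variance estimate in the BTM$(a)$ setting, where the one-particle generator is not symmetric with respect to the counting measure; the key observation is that $\nu_x\equiv 1$ still yields a well-behaved duality function and that the product-binomial initial measure is reversible, so the arguments in \cite[\S 5]{Chiarini_Flo_redig_sau} transfer verbatim to the quenched environment once the semigroup convergence \eqref{eq:conver_L1_on_compacts} is available.
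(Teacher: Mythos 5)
Your proposal takes essentially the same route as the paper, which proves this proposition by combining \eqref{eq:conver_L1_on_compacts} of Corollary \ref{cor:quenched_l1_conv_compacts} (Example (6) of Section \ref{sec:examples-sub-diffusive} verifies Assumptions \ref{ass:ergodic}--\ref{ass:QIP} with $\Omega_0=\Omega$) with the machinery of \cite[Section 5]{Chiarini_Flo_redig_sau}: duality with the single ${\rm BTM}(a)$ walk via $D(x,\eta)=\eta(x)/\alpha_x$, first-moment convergence from the semigroup $L^1$-estimate plus a Riemann-sum argument, a variance bound from two-point duality/negative dependence as in \cite[Proposition 3.1]{Chiarini_Flo_redig_sau}, and Chebyshev. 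One side remark in your last paragraph is wrong, though not load-bearing: $\mu_n=\otimes_x\,{\rm Bin}(\alpha_x,\gamma(x/n))$ with a non-constant profile $\gamma$ is \emph{not} reversible (nor invariant) for the dynamics --- only homogeneous binomial products are --- but your variance estimate never needs this, since the negative-dependence bound holds for arbitrary product initial laws. Finally, the identification of $\rho_t(y)=\mathbf E^{\bbX}_y[\gamma(\bbX(t))]$ as the unique bounded strong solution of \eqref{eq: FK equation} is better attributed to \cite{chen_time-2017} than to \cite{BarlowCerny}.
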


	\subsection*{Acknowledgments}
	S.F.\ acknowledges financial support from the Engineering and Physical Sciences Research Council of the United Kingdom through the EPSRC Early Career Fellowship EP/V027824/1.
	  A.C., S.F.\ and F.S.\ thank the  Hausdorff Institute for Mathematics (Bonn) for its hospitality during the Junior Trimester Program \textit{Stochastic modelling in life sciences} funded by the Deutsche Forschungsgemeinschaft (DFG, German Research Foundation) under Germany’s Excellence Strategy - EXC-2047/1 - 390685813. While this work was written, A.C.~was associated to INdAM (Istituto Nazionale di Alta Matematica ``Francesco Severi'') and GNAMPA.	Finally,  S.F.\ thanks Noam Berger and Martin Slowik for useful and inspiring discussions.

\end{document}